\documentclass[11pt]{amsart}
\usepackage{amssymb,upref, enumerate}
\usepackage{tikz}
\usepackage{amsmath,amssymb,amscd,amsthm,amsxtra}
\usepackage{latexsym}

\usepackage{amsfonts}
\usepackage{verbatim}
\usepackage{pslatex}


%


\def\Xint#1{\mathchoice
{\XXint\displaystyle\textstyle{#1}}%
{\XXint\textstyle\scriptstyle{#1}}%
{\XXint\scriptstyle\scriptscriptstyle{#1}}%
{\XXint\scriptscriptstyle\scriptscriptstyle{#1}}%
\!\int}
\def\XXint#1#2#3{{\setbox0=\hbox{$#1{#2#3}{\int}$}
\vcenter{\hbox{$#2#3$}}\kern-.5\wd0}}

\def\dashint{\Xint-}

\newcommand{\ra}{\rightarrow}

\newcommand{\bey}{\begin{eqnarray*}}
\newcommand{\eey}{\end{eqnarray*}}
\newcommand{\ba}{\begin{align}}
\newcommand{\ea}{\end{align}}
\newcommand{\bea}{\begin{align*}}
\newcommand{\ena}{\end{align*}}
\newcommand{\be}{\begin{equation}}
\newcommand{\ee}{\end{equation}}
\newcommand{\R}{\mathbb R}
\newcommand{\rn}{\mathbb R^n}

\newcommand{\C}{\mathbb C}

\newcommand{\I}{\mathcal I}

\newcommand{\M}{\mathcal M }

\newcommand{\ep}{\epsilon}

\newcommand{\bc}{\begin{center}}
\newcommand{\ec}{\end{center}}

\newcommand{\al}{\alpha}

\newtheorem{theorem}{Theorem}[section]

\newtheorem*{theorema*}{Theorem A}
\newtheorem*{theoremb*}{Theorem B}
\newtheorem*{theoremc*}{Theorem C}

\theoremstyle{definition}

\newtheorem{question}[theorem]{Question}

\theoremstyle{remark}


\newcommand{\cz}{Calder\'on-Zygmund}

\begin{document}


\subjclass[2010]{Primary: 42B20, 47B07; Secondary: 42B25, 47G99}
\keywords{Bilinear operators, compact operators, singular integrals, Calder\'on-Zygmund theory, commutators, fractional integrals, weighted estimates}

\title[Compactness of bilinear commutators]{Compactness properties of commutators of bilinear fractional integrals}

\date{\today}

\author[\'A. B\'enyi]{\'Arp\'ad B\'enyi}

\address{%
Department of Mathematics \\
Western Washington University\\
516 High Street\\
Bellingham, WA 98225, USA}
\email{arpad.benyi@wwu.edu}

\author[W. Dami\'an]{Wendol\'in Dami\'an}
\address{%
Departamento de An\'alisis Matem\'atico, Facultad de Matem\'aticas \\
Universidad de Sevilla\\
41080 Sevilla, Spain}
\email{wdamian@us.es}

\author[K. Moen]{Kabe Moen}
\address{%
Department of Mathematics\\
University of Alabama \\
Tuscaloosa, AL 35487, USA}
\email{kabe.moen@ua.edu}

\author[R. H. Torres]{Rodolfo H. Torres}
\address{%
Department of Mathematics\\
University of Kansas\\
Lawrence, KS 66045, USA}
\email{torres@math.ku.edu}

\thanks{\'A. B. partially supported by a grant from the Simons Foundation (No.~246024). K. M. and R.H. T. partially supported by NSF grants 1201504 and 1069015, respectively.}

\begin{abstract}
{Commutators of a large class of bilinear operators and multiplication by functions in a certain subspace of the space of functions of bounded mean oscillations  are shown to be jointly compact. Under a similar commutation, fractional integral versions of the bilinear Hilbert transform yield separately compact operators.}
\end{abstract}

\maketitle

\section{Introduction}

The smoothing effect of commutators of linear operators is nowadays a well known and very useful fact. For the purposes of this paper, ``smoothing'' will mean the improvement of boundedness to the stronger condition of compactness. A pilar for such considerations in linear setting is the work of Uchiyama \cite{Uch78}, where he showed that linear commutators of Calder\'on-Zygmund operators and pointwise multiplication with a symbol belonging to an appropriate subspace of the John-Nirenberg space $BMO$ are compact. Thus, indeed, these commutators behave better than just being bounded, a result earlier proved by Coifman, Rochberg and Weiss \cite{CRW}. Once compactness is established, one can derive a Fredholm alternative for equations with appropriate coefficients in all $L^p$ spaces with $1<p<\infty$, as in the work of Iwaniec and Sbordone \cite{IS}. Similarly, the theory of compensated compactness of Coifman, Lions, Meyer and Semmes \cite{CLMS} or the integrability theory of Jacobians, see, for example, the work of Iwaniec \cite{Iwa07}, owe a lot to the smoothing effect of commutators.

Bilinear commutators are naturally appearing operators in harmonic analysis, which leads to the equally relevant question about their smoothing behavior. For a bilinear operator $T$, and $b$ an appropriately smooth function, we will consider the following {\it bilinear commutators}:
\begin{align*}
[T, b]_1(f, g)&= T(bf,g)-bT(f,g),\\
[T, b]_2 (f, g)&=T(f,bg)-bT(f,g).
\end{align*}
The notion of compactness in bilinear setting goes back to Calder\'on foundational article \cite{Cal64}. Using the terminology in the work of B\'enyi and Torres \cite{BT}, we will be considering here the joint compactness (or simply compactness) and separate compactness of such bilinear operators.

Given three normed spaces $X, Y, Z$, a bilinear operator $T: X\times Y\to Z$ is said to be \emph{(jointly) compact} if the set $\{T(x, y): \|x\|, \|y\|\leq 1\}$ is precompact in $Z$. Writing $B_{1, X}$ for the closed unit ball in $X$, the definition of compactness specifically requires that if $\{(x_n,y_n)\}\subseteq B_{1,X}\times B_{1,Y}$, then the sequence $\{T(x_n,y_n)\}$ has a convergent subsequence in $Z$. Clearly, any compact bilinear operator $T$ is continuous.

We say that $T:X\times Y\rightarrow Z$ is \emph{compact in the first variable} if $T_y=T(\cdot, y): X\rightarrow Z$ is compact for all $y\in Y$. $T$ is called \emph{compact in the second variable} if $T_x=T(x, \cdot): Y\rightarrow Z$ is compact for all $x\in Y$. Finally, $T$ is called separately compact if $T$ is compact both in the first and second variable. While, in general, it is only true that separate compactness implies separate continuity, if we further consider one of the spaces $X$ or $Y$ to be Banach, the boundedness of $T$ follows from separate compactness as well. For more on these notions of compactness and their basic properties, we refer the interested reader to \cite{BT}.

Throughout this paper, the relevant space for the multiplicative symbols in our commutators will be a subspace of $BMO$, which we denote by $CMO$\footnote{The notion $CMO$ for this space is not uniformly used throughout the literature. See \cite{BT} for remarks and references about this notation.}. Recall that $BMO$ consists of all locally integrable functions $b$ with $\|b\|_{BMO}<\infty$, where
$\|b\|_{BMO}=\sup_{Q}\frac{1}{|Q|}\int_Q |b-b_Q|\,dx,$
with the supremum taken over all cubes $Q\in \rn$ and $b_Q=\frac{1}{|Q|}\int_Q b\,dx$ denoting the average of $b$ on $Q$. We define $CMO$ to be the closure of $C_c^\infty(\R^n)$ in the $BMO$ norm.

Consider for the moment $T$ to be a bilinear Calder\'on-Zygmund operator as defined by Grafakos and Torres \cite{GT}. For simplicity, we further assume that the kernels $K$ and $\nabla K$ satisfy the appropriate decay conditions in such theory. If $b, b_1, b_2\in BMO$, the bilinear commutators can be (formally) expressed in the form
\begin{align*}\label{formally}
[T, b]_1(f, g)(x)&=\int_{\mathbb R^n}\int_{\mathbb R^n} K(x, y, z)(b(y)-b(x))f(y)g(z)\, dydz,\\
[T, b]_2 (f, g)(x)&=\int_{\mathbb R^n}\int_{\mathbb R^n} K(x, y, z)(b(z)-b(x))f(y)g(z)\, dydz.\\
\end{align*}
The main result proved in \cite{BT} confirms that the smoothing effect of commutators of such operators with $CMO$ symbols is present in the bilinear setting as well; thus also extending Uchiyama's result mentioned before to bilinear commutators.  In fact, one has the following.

\begin{theorema*}
Let $T$ be a bilinear \cz\, operator.
If $b\in CMO$,  $1/p+1/q=1/r$, $1<p,q<\infty$ and  $1\leq r <\infty$, then, for $i=1, 2$, $[T, b]_i: L^p\times L^q\rightarrow L^r$ is compact.
\end{theorema*}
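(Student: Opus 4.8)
The plan is to combine the already-available boundedness of the bilinear commutators with a density reduction to smooth symbols, and then to verify a Fréchet--Kolmogorov precompactness criterion in $L^r$. First I would recall the bilinear Coifman--Rochberg--Weiss estimate: for $b\in BMO$ the operators $[T,b]_i$ map $L^p\times L^q\to L^r$ boundedly, with operator norm controlled by $\|b\|_{BMO}$. Since $b\in CMO$, choose $b_k\in C_c^\infty(\rn)$ with $\|b-b_k\|_{BMO}\to 0$. By bilinearity, $[T,b]_i-[T,b_k]_i=[T,b-b_k]_i$, so the operator norm of the difference is $\lesssim\|b-b_k\|_{BMO}\to 0$; that is, $[T,b_k]_i\to[T,b]_i$ in operator norm. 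Because a norm limit of compact bilinear operators is again compact, it suffices to prove the theorem for $b\in C_c^\infty(\rn)$.

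Fix such a $b$ and set $\mathcal{F}=\{[T,b]_i(f,g):\|f\|_{L^p}\le 1,\ \|g\|_{L^q}\le 1\}\subseteq L^r$. By the Fréchet--Kolmogorov theorem, $\mathcal{F}$ is precompact in $L^r$ (valid for $1\le r<\infty$) provided it is (a) uniformly bounded in $L^r$, (b) uniformly equicontinuous, i.e. $\sup_{h\in\mathcal{F}}\|h(\cdot+t)-h\|_{L^r}\to 0$ as $t\to 0$, and (c) uniformly decaying at infinity, i.e. $\sup_{h\in\mathcal{F}}\|h\,\mathbf{1}_{\{|x|>R\}}\|_{L^r}\to 0$ as $R\to\infty$. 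Condition (a) is immediate from the boundedness in the previous paragraph. For condition (c) I would exploit that $b$ has compact support, say in $B(0,M)$: for $|x|>2M$ one has $b(x)=0$, so the commutator kernel factor $b(y)-b(x)$ reduces to $b(y)$, which localizes the relevant integration to $|y|\le M$; the decay of $K$ then forces the output to decay in $|x|$, and an application of Hölder's inequality together with the size bounds on $K$ closes (c) uniformly in $f,g$.

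The main work lies in the equicontinuity (b). Writing $[T,b]_1(f,g)(x)=\iint K(x,y,z)(b(y)-b(x))f(y)g(z)\,dy\,dz$ and forming the translate, I would decompose $b(y)-b(x+t)=(b(y)-b(x))-(b(x+t)-b(x))$ to split the difference into one piece governed by the smoothness of the kernel in its first entry, via the decay of $\nabla K$, and one piece governed by the smoothness of $b$, via $|b(x+t)-b(x)|\lesssim|t|$. The crucial region is the near-diagonal set where $K$ is singular: there I would further separate $|x-y|+|x-z|\lesssim|t|$ from its complement, and use that, $b$ being Lipschitz, the factor $|b(y)-b(x)|\lesssim|x-y|$ supplies an extra power that tames the singularity of $K$. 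After dominating the resulting kernels by integrable expressions, Hölder's inequality together with the $L^p\times L^q\to L^r$ mapping of the associated sublinear/maximal operators closes the estimate uniformly over the unit balls and produces a modulus of continuity in $t$ that vanishes as $t\to 0$. The case $i=2$ is identical with the roles of $y$ and $z$ interchanged.

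The principal obstacle I anticipate is exactly this near-diagonal translation estimate: one must simultaneously capture the gain from the regularity of $K$ and the gain from the smoothness of $b$, and organize the bounds so that they reduce to bounded bilinear quantities to which Hölder and the boundedness results apply, all uniformly in $(f,g)$. By contrast, the far-diagonal translation estimates and the decay estimate (c) are routine consequences of the kernel size and smoothness conditions.
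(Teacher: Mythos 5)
Your overall plan coincides with the method this paper uses (and attributes to \cite{BT}; the argument is displayed here in the proof of Theorem~\ref{main1}): reduce to $b\in C_c^\infty$ via the commutator bound $\lesssim\|b\|_{BMO}$ together with the fact that operator-norm limits of compact bilinear operators are compact, then verify conditions (a), (b), (c) of Theorem~B, handling decay at infinity through the compact support of $b$ and the kernel size estimate, and handling the translation estimate by a near/far diagonal splitting that plays the regularity of $K$ against the Lipschitz bound for $b$. There is, however, one step that fails as you state it: you propose to split at the scale $|x-y|+|x-z|\lesssim|t|$. If the threshold $\delta$ is comparable to $|t|$, the far-region term carrying the kernel regularity (the term $B$ in the paper's decomposition) is estimated, after breaking $\{|x-y|+|x-z|>\delta\}$ into dyadic annuli, by
$$
|B(x)|\lesssim \|b\|_\infty\,|t|\sum_{k\geq 0}\frac{(2^k\delta)^{2n}}{(2^k\delta)^{2n+1}}\,\mathcal M(f,g)(x)\approx \frac{|t|}{\delta}\,\mathcal M(f,g)(x),
$$
where $\mathcal M$ is the bilinear maximal function. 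With $\delta\sim|t|$ this is merely $O(1)\cdot\mathcal M(f,g)$, uniformly bounded but \emph{not} small as $t\to 0$, so no modulus of continuity is produced and condition (c) of Theorem~B is not verified. The near-diagonal terms force $\delta\to 0$ (they contribute $\lesssim(\delta+|t|)\|\nabla b\|_\infty$), while the $\nabla K$ term forces $|t|/\delta\to 0$, so the threshold must be an \emph{intermediate} scale: the paper takes $\delta=|t|/\epsilon$ with $|t|<\epsilon^2$, so that $\delta<\epsilon$ and $|t|/\delta=\epsilon$ simultaneously (equivalently, $\delta=\sqrt{|t|}$ would do).

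Apart from this, your sketch is sound and even slightly streamlines one point: your algebraic split $b(y)-b(x+t)=(b(y)-b(x))-(b(x+t)-b(x))$ pairs the second difference with the \emph{untruncated} $T(f,g)(x+t)$, whose $L^r$ norm is controlled directly by bilinear Calder\'on--Zygmund boundedness (legitimate here since $r\geq 1$); the truncated $A$-term in \cite{BT} instead requires bounds for a maximal truncated operator, which you avoid. Once the intermediate scale $\delta$ is inserted, the remaining ingredients you list — the bound $|b(y)-b(x)|\lesssim|x-y|$ taming the singularity of $K$ near the diagonal, domination by $\mathcal M(f,g)$ on dyadic annuli, and H\"older plus the $L^p\times L^q\to L^r$ boundedness of the maximal operator — close the estimate exactly as in the paper.
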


The proof of Theorem A, as well as the main results proved in this work and other compactness results in the literature,
make use of a known characterization of precompactness in Lebesgue spaces, known as the Fr\'echet-Kolmogorov-Riesz theorem; see, for example, Yosida's book \cite{Yo}.

\begin{theoremb*}
Let $1\leq r<\infty$. A subset $\mathcal K\subseteq L^r$ is compact if and only if the following three conditions are satisfied:
\begin{enumerate}[{\rm (a)}]
\item $\mathcal K$ is bounded in $L^r$;
\item $\lim_{A\ra \infty} \int_{|x|>A}|f(x)|^r\,dx=0$ uniformly for $f\in \mathcal K$;
\item $\lim_{t\ra 0}\|f(\cdot+t)-f\|_{L^r}=0$ uniformly for $f\in \mathcal K$.
\end{enumerate}
\end{theoremb*}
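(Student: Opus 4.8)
The plan is to prove the two implications separately; the necessity of (a)--(c) is routine, while the real content lies in their sufficiency. For necessity I would assume $\mathcal{K}$ is precompact and extract each condition from a finite cover. Condition (a) is immediate, since precompact sets are bounded. Conditions (b) and (c) hold for a single fixed function---the former by dominated convergence (the tail integral is dominated by $|f|^r\in L^1$ and the regions shrink to the empty set), the latter because translation is continuous in $L^r$, itself a consequence of the density of $C_c^\infty$ in $L^r$. To promote these to uniform statements over $\mathcal{K}$, given $\epsilon>0$ I would cover $\mathcal{K}$ by finitely many balls $B(f_j,\epsilon)$, $j=1,\dots,N$, and for an arbitrary $f\in\mathcal{K}$ choose the nearest center $f_j$. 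The triangle inequality in $L^r$, together with the translation invariance of the Lebesgue norm (so that $\|f(\cdot+t)-f_j(\cdot+t)\|_{L^r}=\|f-f_j\|_{L^r}$), then reduces the desired uniform bound for $f$ to the finitely many bounds for the centers $f_j$.

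For the sufficiency, fix $\epsilon>0$; I would build a finite $\epsilon$-net for $\mathcal{K}$ in three steps. First, using (b), choose $A$ so large that $\int_{|x|>A}|f|^r\,dx<\epsilon^r$ for all $f\in\mathcal{K}$, which lets me replace each $f$ by its truncation to $B(0,A)$ up to an error $\epsilon$ in $L^r$. Second, regularize by convolving with an approximate identity $\phi_\delta(x)=\delta^{-n}\phi(x/\delta)$, where $\phi\in C_c^\infty$ is nonnegative, supported in the unit ball, and has integral one. Minkowski's integral inequality and condition (c) give
\[
\|f*\phi_\delta-f\|_{L^r}\leq\sup_{|t|\leq\delta}\|f(\cdot-t)-f\|_{L^r},
\]
and the right-hand side is smaller than $\epsilon$ for all $f\in\mathcal{K}$ simultaneously once $\delta$ is small, by the \emph{uniformity} in (c). Thus the smooth family $\{f*\phi_\delta:f\in\mathcal{K}\}$ lies within $\epsilon$ of $\mathcal{K}$ in $L^r$.

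It remains to produce a finite net for the regularized family on $\overline{B(0,A)}$, and here I would invoke Arzel\`a--Ascoli. H\"older's inequality yields the uniform bound $|f*\phi_\delta(x)|\leq\|f\|_{L^r}\|\phi_\delta\|_{L^{r'}}$, while
\[
|f*\phi_\delta(x)-f*\phi_\delta(y)|\leq\|f\|_{L^r}\,\|\phi_\delta(x-\cdot)-\phi_\delta(y-\cdot)\|_{L^{r'}}
\]
together with the uniform continuity of $\phi_\delta$ gives equicontinuity---both uniform over $f\in\mathcal{K}$ thanks to (a). Arzel\`a--Ascoli then furnishes a finite $\epsilon$-net for $\{f*\phi_\delta\}$ in $C(\overline{B(0,A)})$, hence in $L^r(B(0,A))$, since uniform convergence on a set of finite measure forces $L^r$ convergence. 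Chaining the truncation, the mollification bound, and this net gives a finite $3\epsilon$-net for $\mathcal{K}$ in $L^r$, so $\mathcal{K}$ is totally bounded and therefore precompact by completeness of $L^r$. I expect the main obstacle to be orchestrating the sufficiency so that all three approximations are \emph{simultaneously} uniform over $\mathcal{K}$: condition (c) is precisely what makes the mollification step uniform, and without it Arzel\`a--Ascoli would control the regularized functions but not their $L^r$ distance to the originals.
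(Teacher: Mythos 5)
Your proposal is correct; note, however, that the paper itself gives no proof of Theorem B, which is quoted as a known result with a pointer to Yosida's book \cite{Yo}, and your argument is essentially the classical proof found there: necessity via a finite $\epsilon$-net promoting the single-function statements to uniform ones, sufficiency via uniform mollification (Yosida uses Steklov averages where you use a smooth approximate identity, an immaterial difference) followed by Arzel\`a--Ascoli on a large closed ball, with condition (c) supplying exactly the uniformity of the mollification step, as you observe. One bookkeeping point to add in a full write-up: your finite net lives in $L^r(B(0,A))$, so to compare a general $f\in\mathcal K$ with a net element (extended by zero) you also need $\|f*\phi_\delta\|_{L^r(|x|>A)}$ to be small; this follows in one line from $\|f*\phi_\delta-f\|_{L^r}<\epsilon$ together with condition (b), and it is cleaner to mollify first and truncate second rather than the reverse, since the truncated family need not inherit (c) uniformly.
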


The goal of this paper is two fold. First, it aims to extend Theorem A to a larger class of bilinear operators, denoted by $\{T_\al\}_{\al>0}$, that has as limiting case for $\al= 0$ the bilinear Calder\'on-Zygmund operators. This is obtained in Theorem~2.1 below. Second, it  investigates a more singular version of these operators, whose limiting case is the bilinear Hilbert transform, and shows that the smoothing phenomenon, albeit weaker, is still present under commutation. This is achieved in Theorem 3.2.

\medskip

\noindent{\bf Acknowledgment.} This work was positively impacted by the interactions that occurred during  B\'enyi's and Torres'  stay at the Erwin Schr\"odinger Institute (ESI), Vienna, Austria, for the special semester on Modern Methods of Time-Frequency Analysis II. They wish to express their gratitude to the ESI and the organizers of the event for their support and warm hospitality.

\section{Compactness for commutators of the class $\{T_\al\}$}

We begin by defining the larger class of bilinear operators $\{T_\al\}$, with $\al$ in some appropriate open interval contained in $\mathbb R$.

Fix $0<\al<2n$ and let $K_\al (x,y,z)$ be a kernel on $\R^{3n}$ defined away from $x=y=z$ that satisfies
\begin{equation}\label{size}|K_\al(x,y,z)|\lesssim \frac{1}{(|x-y|+|x-z|)^{2n-\al}}\end{equation}
and
\begin{equation}\label{smoothness}|K_\al(x,y,z)-K_\al(x+h,y,z)|\lesssim \frac{|h|}{(|x-y|+|x-z|)^{2n-\al+1}},\end{equation}
with the analogous estimates in the $y$ and $z$ variables.  We consider the bilinear operator $T_\al$
\begin{equation}\label{operator}
T_\al (f,g)(x)=\int_{\R^{2n}}K_\al(x,y,z)f(y)g(z)\,dydz\
\end{equation}
defined a priori for, say, $f,g$ bounded and with compact support.
It is easy to see that they extend with the same integral definition \eqref{operator} to bounded operators from
$L^p \times L^q \to L^r$ provided $0<\alpha < 2n, 1 < p,q < \infty,\, \al/n< 1/p+1/q$, and  $1/r = 1/p + 1/q- \al/n$. Clearly,  the analog kernels for $\al=0$ correspond to a  \emph{bilinear Calder\'on-Zygmund kernel}, see again \cite{GT}.

The typical example of the above operators is, of course,  the \emph{bilinear Riesz potential operator} $\I_\al$,  given by the kernel
$$K_\al(x,y,z)=\frac{1}{(|x-y|+|x-z|)^{2n-\al}}.$$
The first relevant observation about the family $\{T_\al\}$ is that, with respect to boundedness, its commutators behave similarly as in the ``end-point'' case $\al=0$.

\begin{theoremc*} Let  $0<\al<2n$, $1<p,q<\infty$, $r\geq 1$, $\frac{\al}{n}<\frac1p+\frac1q $, $\frac1r=\frac1p+\frac1q-\frac{\al}{n}$ and $b
\in BMO$. The following estimates hold:
\begin{align*}
\|[T_\al, b]_1(f,g)\|_{L^r}&\lesssim \|b\|_{BMO}\|f\|_{L^p}\|g\|_{L^q},\\
\|[T_\al, b]_2(f,g)\|_{L^r}&\lesssim \|b\|_{BMO}\|f\|_{L^p}\|g\|_{L^q}.
\end{align*}
\end{theoremc*}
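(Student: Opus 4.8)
The plan is to reduce both estimates to a single pointwise bound for the Fefferman--Stein sharp maximal function, and then to invoke the boundedness of $T_\al$ itself together with that of an auxiliary bilinear fractional maximal operator. For a cube $Q$ with center $x_Q$ and average $b_Q$, averaging the defining identity yields the algebraic decomposition
\begin{equation*}
[T_\al, b]_1(f,g)(x) = (b(x)-b_Q)\,T_\al(f,g)(x) - T_\al\big((b-b_Q)f,g\big)(x),
\end{equation*}
valid modulo the additive constant $b_Q$, which is irrelevant for the sharp maximal function. Introducing the bilinear fractional maximal operator
\begin{equation*}
\mathcal M_{\al,s}(f,g)(x)=\sup_{Q\ni x}\ell(Q)^{\al}\Big(\avgQ |f|^{s}\Big)^{1/s}\Big(\avgQ |g|^{s}\Big)^{1/s},
\end{equation*}
I would aim to prove, for suitable $0<\delta<1$, $\delta<\epsilon<r$ and $s$ slightly larger than $1$, the inequality
\begin{equation*}
M^{\#}_{\delta}\big([T_\al,b]_1(f,g)\big)(x)\lesssim \|b\|_{BMO}\Big(M_{\epsilon}\big(T_\al(f,g)\big)(x)+\mathcal M_{\al,s}(f,g)(x)\Big).
\end{equation*}
Granting this, the Fefferman--Stein inequality (applied after a routine density reduction guaranteeing the a priori finiteness it requires, e.g. by first taking $b$ bounded and $f,g\in L^\infty_c$), the $L^{r/\epsilon}$-boundedness of the Hardy--Littlewood maximal operator, the hypothesized $L^p\times L^q\to L^r$ boundedness of $T_\al$, and the boundedness $\mathcal M_{\al,s}\colon L^p\times L^q\to L^r$ (which follows by rescaling from the known bounds for bilinear fractional maximal operators once $s$ is close enough to $1$) together deliver the stated estimate for $[T_\al,b]_1$; the estimate for $[T_\al,b]_2$ then follows by interchanging the roles of $f$ and $g$.

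To establish the pointwise estimate I would fix $x\in Q$, split $f=f_1+f_2$ and $g=g_1+g_2$ with $f_1=f\chi_{3Q}$, $g_1=g\chi_{3Q}$, and subtract from $[T_\al,b]_1(f,g)$ the constant $c_Q=-T_\al((b-b_Q)f_2,g_2)(x_Q)$, producing five pieces. The term $(b-b_Q)T_\al(f,g)$ is controlled by $\|b\|_{BMO}M_\epsilon(T_\al(f,g))(x)$ through Hölder's inequality and the John--Nirenberg inequality. The local-local term $T_\al((b-b_Q)f_1,g_1)$ is handled by Kolmogorov's inequality together with the boundedness of $T_\al$ on a slightly smaller tuple of exponents, followed again by Hölder and John--Nirenberg to extract $\|b\|_{BMO}$, giving $\|b\|_{BMO}\mathcal M_{\al,s}(f,g)(x)$. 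The two mixed terms and the global-global term are treated directly from the kernel hypotheses \eqref{size} and \eqref{smoothness}.

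The crux, and the step I expect to be the main obstacle, is the global-global term $T_\al((b-b_Q)f_2,g_2)(x)-T_\al((b-b_Q)f_2,g_2)(x_Q)$. Here one uses the smoothness estimate \eqref{smoothness} in the $x$ variable and decomposes the region where both $y$ and $z$ lie outside $3Q$ into annuli on which $|x_Q-y|+|x_Q-z|\sim 2^{k}\ell(Q)$. The naive size bound produces a geometric series growing like $2^{k\al}$, and even after exploiting the smoothness gain $2^{-k}$ one is left with $\sum_k (k+1)\,2^{k(\al-1)}$, which diverges once $\al\ge 1$; this is precisely where the fractional setting departs from the Calder\'on--Zygmund case. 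The resolution is to bound each annular average not by the ordinary maximal function but by the bilinear fractional maximal operator $\mathcal M_{\al,s}$: absorbing the factor $\ell(2^kQ)^{\al}=2^{k\al}\ell(Q)^{\al}$ into the supremum supplies the additional decay $2^{-k\al}$, so the full sum collapses to $\sum_k (k+1)\,2^{-k}<\infty$. The symbol is processed along the way by combining John--Nirenberg with the growth $|b_{2^kQ}-b_Q|\lesssim k\,\|b\|_{BMO}$, whose polynomial factor is harmless against the geometric decay. The mixed terms are analogous and in fact easier, since the localization of one argument to $3Q$ already forces a volume gain; there only the size estimate \eqref{size} and the same fractional maximal comparison are needed.
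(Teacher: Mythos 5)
Your proposal is correct, but it is not the route this paper takes: the paper does not prove Theorem C at all, instead quoting it from the cited works of Chen--Xue and Lian--Wu, and then sketching an \emph{alternative} argument in the restricted range $r>1$, $1/p+1/q<1$, namely the pointwise domination $|T_\al(f,g)|\lesssim \I_\al(|f|,|g|)$, Moen's weighted bounds for $\I_\al$, and the ``Cauchy integral trick'' (conjugation by $e^{zb}$, carried out in detail for $BI_\al$ in the proof of Theorem~\ref{thm:bdd}). Your sharp-maximal-function argument is essentially a reconstruction of the proof in the cited reference of Chen and Xue: the pointwise bound $M^{\#}_{\delta}([T_\al,b]_1(f,g))\lesssim \|b\|_{BMO}\big(M_{\epsilon}(T_\al(f,g))+\mathcal M_{\al,s}(f,g)\big)$ with $0<\delta<\epsilon<r$ and $s>1$ close to $1$ is exactly their key lemma, and you correctly isolate the one genuinely new difficulty relative to the Calder\'on--Zygmund case: the smoothness gain $2^{-k}$ from \eqref{smoothness} alone leaves the divergent series $\sum_k (k+1)2^{k(\al-1)}$ when $\al\geq 1$, and the fix is to absorb $\ell(2^kQ)^{\al}$ into the fractional maximal operator, trading $2^{k\al}$ for membership in the supremum and collapsing the sum to $\sum_k(k+1)2^{-k}$. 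Your treatment of the mixed terms also checks out: the localization of one argument produces a factor $2^{-kn/s'}$ after comparing the $3Q$-average with the $2^{k}Q$-average, so no constant subtraction is needed there, for every $0<\al<2n$. Comparing the two approaches: the Cauchy-trick argument is much shorter once the weighted theory for $\I_\al$ is granted, but it genuinely requires $r>1$ (Minkowski's integral inequality) and $1/p+1/q<1$ (the $A_{s,r}$ weight classes with $s>1$), so it misses the endpoint $r=1$ allowed in Theorem C; your sharp-maximal route covers the full stated range $r\geq 1$, needs only the unweighted bounds for $T_\al$ (including the small-exponent or weak-type input for the Kolmogorov step, available since $T_\al$ maps into $L^r$ with $r<1$ as noted after \eqref{operator}), and yields weighted estimates as a by-product. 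Your acknowledgment of the a priori finiteness needed for Fefferman--Stein, handled by first taking $b$ bounded and $f,g\in L^\infty_c$ and then passing to the limit with constants depending only on $\|b\|_{BMO}$, closes the one routine loophole in such arguments.
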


As usual, the notation $x\lesssim y$ indicates that $x\leq Cy$ with a positive constant $C$ independent of $x$ and $y$. For a proof of the above boundedness properties, see the papers by Chen and Xue \cite{CX} and Lian and Wu \cite{LW}. In the linear case the corresponding result goes back to the work of Chanillo \cite{Ch}. The results for the multilinear \cz\, case used in Theorem A were addressed by P\'erez and Torres \cite{PT}, Tang \cite{Tan08}, and Lerner, Ombrosi, P\'erez, Torres and Trujillo-Gonz\'alez \cite{LOPTT}.

Our real interest, however, lies in the possibility of improving boundedness to compactness. In the linear case, the compactness  of the commutators of fractional integrals and multiplication by appropriate functions  has already received some attention
in  several contexts. See, for example, the work of Chen, Ding and Wang \cite{CDW} where the compactness in the
usual Lebesgue measure case is traced back to Wang \cite{W}. See also Betancor and Fari\~na's work \cite{BF} for the setting of non-doubling measures;  the boundedness in this case was  obtained by Chen and Sawyer \cite{CS}.

We note that Theorem B intrinsically assumes that $r\geq 1$. The boundedness
 result in Theorem C
of the operators $[T_\al, b]_1$, $[T_\al, b]_2$  when $r>1$, and  $1/p+1/q<1$, can be alternately obtained as follows. The kernel bound \eqref{size}, implies that
$$
|T_\al(f,g)(x)|\lesssim \int_{\R^{2n}}\frac{|f(y)||g(z)|}{(|x-y|+|x-z|)^{2n-\al}}\,dydz=\I_\al(|f|,|g|)(x).
$$
As shown by Moen \cite{M}, the operator $\I_\al$ satisfies appropriate weighted estimates. Therefore, so does $T_\al$, and we can use the ``Cauchy integral trick''. An exposition of this ``trick'' can be found in the proof of Theorem~\ref{thm:bdd}, which deals with the more singular versions $BI_\al$ of the operators $T_\al$.
Our first main result is an extension of Theorem A that encompasses the commutators of the family $\{T_\al\}_{0<\al< 2n}$.
\begin{theorem}\label{main1}
Let $0< \al<2n$, $1<p,q<\infty$, $1\leq r<\infty$,   $\frac{\al}{n}< \frac1p+\frac1q$,   $\frac1r=\frac1p+\frac1q-\frac{\al}{n},$ and let $b\in CMO$. If $T_\al$ is the bilinear operator defined by \eqref{operator} whose kernel $K_\al$ satisfies \eqref{size} and \eqref{smoothness}, then $[T_\al, b]_1, [T_\al, b]_2: L^p\times L^q\ra L^r$ are compact.
\end{theorem}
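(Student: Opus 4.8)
The plan is to use the by-now standard two-step approach for commutator compactness: reduce to a smooth, compactly supported symbol using the boundedness in Theorem C, and then verify the Fr\'echet--Kolmogorov--Riesz conditions of Theorem B for the image of the unit balls. Since $CMO$ is by definition the $BMO$-closure of $C_c^\infty(\R^n)$, given $b\in CMO$ I pick $\phi_j\in C_c^\infty(\R^n)$ with $\|b-\phi_j\|_{BMO}\to 0$. The symbol dependence is linear, so $[T_\al,b]_i-[T_\al,\phi_j]_i=[T_\al,b-\phi_j]_i$, and Theorem C gives
\[
\|[T_\al,b]_i-[T_\al,\phi_j]_i\|_{L^p\times L^q\to L^r}\lesssim\|b-\phi_j\|_{BMO}\to 0.
\]
Thus $[T_\al,b]_i$ is an operator-norm limit of the $[T_\al,\phi_j]_i$, and since the jointly compact bilinear operators form a closed subspace under operator-norm convergence (the bilinear analogue of the classical linear fact, from the basic theory in \cite{BT}), it suffices to treat a fixed $\phi\in C_c^\infty(\R^n)$, say with $\mathrm{supp}\,\phi\subseteq B(0,\rho)$. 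I carry out the case $i=1$; the case $i=2$ is identical with the roles of $y$ and $z$ interchanged.

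Applying Theorem B to the image $\K=\{[T_\al,\phi]_1(f,g):\|f\|_{L^p},\|g\|_{L^q}\le1\}$, condition (a) is precisely the boundedness asserted in Theorem C. Condition (b), the uniform decay at infinity, is where the slower decay of the fractional kernel must be handled with some care. For $|x|>2\rho$ we have $\phi(x)=0$, so the commutator collapses to $\int_{\R^{2n}}K_\al(x,y,z)\phi(y)f(y)g(z)\,dy\,dz$ with the $y$-integration confined to $|y|\le\rho$; there $|x-y|\gtrsim|x|$. Estimating the inner $z$-integral by H\"older against the tail of the kernel and using the fixed compact support of $\phi f$, one obtains a bound $|[T_\al,\phi]_1(f,g)(x)|\lesssim|x|^{-\beta}$ with some $\beta>0$ uniformly on the unit balls, so that $\int_{|x|>A}|[T_\al,\phi]_1(f,g)|^r\,dx\to0$ uniformly; splitting the $z$-integration into the regions $|z|\le|x|/2$ and $|z|>|x|/2$ takes care of the borderline parameter ranges.

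The crux is condition (c), the equicontinuity of translates, and I expect this to be the main obstacle. Telescoping the increment $[T_\al,\phi]_1(f,g)(x+t)-[T_\al,\phi]_1(f,g)(x)$, the integrand splits as $[\,K_\al(x+t,y,z)-K_\al(x,y,z)\,](\phi(y)-\phi(x+t))+K_\al(x,y,z)(\phi(x)-\phi(x+t))$. The second, symbol-increment, term is immediately controlled by $|\phi(x)-\phi(x+t)|\lesssim|t|\,\|\nabla\phi\|_\infty$, reducing it to $|t|\,\I_\al(|f|,|g|)$ and hence to $O(|t|)$ in $L^r$ by the boundedness of the bilinear fractional integral. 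For the first, kernel-increment, term I split the $(y,z)$-integration relative to $d=|x-y|+|x-z|$. On the far region $d>2|t|$ the smoothness estimate \eqref{smoothness} applies and supplies a factor $|t|\,d^{-(2n-\al+1)}$; since there $|\phi(y)-\phi(x+t)|\lesssim|y-(x+t)|\lesssim d$, the integrand is $\lesssim|t|\,d^{-(2n-\al)}|f(y)||g(z)|$, again dominated by $|t|\,\I_\al(|f|,|g|)$. On the near region $d\le2|t|$ the smoothness estimate gains nothing, so I discard the increment and bound the two pieces separately via the size estimate \eqref{size}: on this shrinking region the symbol factor is itself $O(|t|)$ (because $|x-y|\le d\le2|t|$), so each piece is $\lesssim|t|\,\I_\al(|f|,|g|)$ centered at $x$ or at $x+t$, still $O(|t|)$ in $L^r$ by translation invariance. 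Summing the three contributions yields $\|[T_\al,\phi]_1(f,g)(\cdot+t)-[T_\al,\phi]_1(f,g)\|_{L^r}\lesssim|t|$, uniformly on the unit balls, which is (c).

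The delicate part throughout is precisely the near-diagonal regime, where \eqref{smoothness} is unavailable and the kernel is at its most singular; the device that makes everything work is to exploit the gradient bound on the smooth symbol to trade both the $|t|$-gain from the far region and the extra singularity in the near region back into the order-$\al$ operator $\I_\al$, which is bounded in the given range by Theorem C, so that no fractional integral of out-of-range order is ever required. The careful bookkeeping of the two kernel centers $x$ and $x+t$ and their mismatched near/far regions is the only genuinely technical point, and it is resolved by the translation invariance of Lebesgue measure together with the uniform domination by $\I_\al$.
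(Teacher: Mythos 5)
Your proposal is correct, and while it shares the paper's skeleton --- reduce to $b=\phi\in C_c^\infty$ via Theorem C together with the operator-norm closedness of compact bilinear operators from \cite{BT}, then verify (a)--(c) of Theorem B --- both substantive estimates are run along genuinely different lines. For (b), the paper never establishes pointwise decay: it splits $\al=\al_p+\al_q$, factorizes $(|x-y|+|x-z|)^{2n-\al}\ge |x-y|^{n-\al_p}|x-z|^{n-\al_q}$, bounds the tail by $|x|^{\al_p-n}I_{\al_q}(|g|)(x)$, and closes with H\"older at the norm level using $1/r=1/s_p+1/s_q$ and the $L^q\to L^{s_q}$ boundedness of the linear Riesz potential. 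Your route instead yields a genuine pointwise bound; if you track the exponents, your $|z|\lessgtr |x|/2$ split gives $\beta=n+n/q-\al$, with convergence of the H\"older integral on the far piece equivalent to $\al/n<1+1/q$ (a consequence of $\al/n<1/p+1/q$) and the required $r\beta>n$ reducing exactly to $p>1$; these checks should be written out, since they are the only content behind ``some $\beta>0$'', but they do go through. For (c), the paper follows \cite{BT}: a four-term splitting $A,B,C,D$ with a free truncation radius $\delta$, where the kernel-increment term $B$ is estimated using only $\|b\|_\infty$ and a dyadic-shell summation, producing the bound $(|t|/\delta)\,\M_\al(f,g)$, after which one optimizes $\delta=|t|/\ep$ with $|t|<\ep^2$. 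You telescope into two terms, split at $|x-y|+|x-z|=2|t|$, and --- this is the real departure --- spend the gradient bound $|\phi(y)-\phi(x+t)|\lesssim \|\nabla\phi\|_\infty(|x-y|+|x-z|)$ in the far region to cancel the extra power of the distance coming from \eqref{smoothness}, so that every piece is dominated by $|t|\,\I_\al(|f|,|g|)$ evaluated at $x$ or at $x+t$. This is leaner (no $\M_\al$, no dyadic decomposition, no auxiliary parameter $\delta$) and quantitatively sharper (a Lipschitz modulus $O(|t|)$ versus the paper's $O(\ep)$ for $|t|<\ep^2$), and your restriction of \eqref{smoothness} to the range $|t|\le d/2$ is in fact the natural domain of validity of such regularity estimates. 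What the paper's heavier $\delta$-splitting buys is portability: in Theorem~\ref{main2} the identical $A,B,C,D$ scheme is reused for $[BI_\al,b]_1$ with the lumped kernel $K_g(x,y)=g(2x-y)\,|x-y|^{\al-n}$, whose increment contains the rough factor $g(2x+2t-y)-g(2x-y)$ and therefore cannot gain the extra distance factor your argument extracts from kernel smoothness; your shortcut is specific to kernels satisfying \eqref{smoothness} in full, which is exactly the setting of the present theorem.
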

\begin{proof} We will work with $[T_\al, b]_1$; by symmetry, the proof for $[T_\al, b]_2$ is the same. By the form of the norm estimates in Theorem C, density, and the results about limits of compact bilinear operators in the operator norm proved in \cite{BT}, we may assume that $b\in C_c^\infty$. Denote by $B_{1,p}$ and $B_{1,q}$ the unit balls in $L^p$ and $L^q$, respectively and let $\mathcal K=[T_\al, b]_1 (B_{1,p},B_{1,q})$. Since $[T_\al, b]_1$ is a bounded operator, see Theorem~C, it is clear that $\mathcal K$ is a bounded set in $L^r$, thus fulfilling condition (a) in Theorem B.  We now aim to show that condition (b) in Theorem B holds.

We introduce the following two indices:
$$\alpha_p = \alpha (1/p+1/q)^{-1} 1/p\,\,\,\, \text{and}\,\,\,\, \alpha_q = \alpha (1/p+1/q)^{-1} 1/q.$$
Clearly, $\alpha_p + \alpha_q = \alpha$. Since $1/p+1/q - \alpha/n >0$,
there exist $s_p>p>1 $ and $s_q>q>1$ such that
$$1/s_p = 1/p - \alpha_p/n\,\,\,\,\text{and}\,\,\,\,
1/s_q = 1/q - \alpha_q/n.$$
Now, since $p, q>1$, we see that $n>\max (\al_p, \al_q)$. In particular, this yields
$$(|x-y| +|x-z|)^{2n-\al} = (|x-y| +|x-z|)^{(n-\al_p)+(n-\al_q)}\geq |x- y|^{n-\al_p} |x-z|^{n-\al_q}.$$
Pick now $R>1$ large enough so that $R>2\max\{|x|: x\in \text{supp}\,b\}$. Using \eqref{size} we see that, for $|x|>R$, we have
\begin{align*}
|[T_\al, b]_1&(f,g)(x)|\lesssim \|b\|_\infty\int_{\R^n}\int_{y\in \text{supp}\,b}\frac{|f(y)||g(z)|}{(|x-y|+|x-z|)^{2n-\al}}\,dydz\\
& \leq \|b\|_\infty \int_{y\in \text{supp} \,b} \int_{\R^n}\frac{|f(y)| |g(z)|}{|x- y|^{n-\al_p} |x-z|^{n-\al_q}}\,dzdy\\
& \lesssim  \frac{\|b\|_\infty}{|x|^{n-\al_p}} \int_{y\in \text{supp} \,b}|f(y)|\int_{\R^n}\frac{|g(z)|}{|x-z|^{n-\al_q}}\,dz \,dy\\
&\lesssim\frac{\|b\|_\infty \, I_{\al_q}(|g|)(x) \|f\|_{L^p}}{|x|^{n-\al_p}} |\text{supp}\,b|^{1/p'}.
\end{align*}
Here, we abused a bit the notation and wrote $I_\al$ also for the linear Riesz potential, $I_\al (f)(x)=\int_{\rn} \frac{f(x)}{|x-y|^{n-\al}}\,dy$. Next, we observe that, since $s_p (n- \al_p)=np > n,$ the function $|x|^{s_p (\al_p-n)}$ is integrable at infinity. Therefore, for a given $\epsilon>0$, we will be able to select an $R=R(\epsilon)$ (but independent of $f$ and $g$) such that
$$\Big(\int_{|x|>R}|x|^{s_p(\al_p-n)}\,dx\Big)^{1/s_p}<\epsilon.$$

Notice now that the indices $s_p, s_q>1$ satisfy $1/r =1/s_p +1/s_q$. Therefore, we can raise the previous point-wise estimate on $|[T_\al, b]_1 (f,g)(x)|$ to the power $r$, integrate over $|x|>R$, and use the H\"older inequality and the $L^q\ra L^{s_q}$ boundedness of $I_{\al_q}$ to get
$$
\Big(\int_{|x|>R} |[T_\al, b]_1(f,g)(x)|^r\,dx\Big)^{1/r}\lesssim \ep\|f\|_{L^p}\|I_{\al_q}(|g|) \|_{L^{s_q}}\lesssim \ep  \|f\|_{L^p} \|g\|_{L^q};
$$
this, in turn, proves that condition (b) in Theorem B is satisfied.

Next, we will use the smoothness of $b$ and that of the kernel $K_\al$ to show that condition (c) in Theorem B holds; specifically, we want to show that
$$\lim_{t\ra 0}\int_{\R^n}|[T_\al, b]_1(f,g)(x+t)-[T_\al, b]_1(f,g)(x)|^r\,dx=0.$$
We use the following splitting from \cite{BT}:
$$[T_\al, b]_1(f,g)(x+t)-[T_\al, b]_1(f,g)(x)=A(x)+B(x)+C(x)+D(x),$$
where, for $\delta>0$ to be chosen later, we have
\begin{align*}
A(x)&=\iint_{|x-y|+|x-z|>\delta}(b(x+t)-b(x))K_\al(x,y,z)f(y)g(z)\,dydz\\
B(x)&=\iint_{|x-y|+|x-z|>\delta}(b(x+t)-b(y))(K_\al(x+t,y,z)-K_\al(x,y,z))f(y)g(z)\,dydz\\
C(x)&=\iint_{|x-y|+|x-z|\leq\delta}(b(y)-b(x))K_\al(x,y,z)f(y)g(z)\,dydz\\
D(x)&=\iint_{|x-y|+|x-z|\leq\delta}(b(x+t)-b(y))K_\al(x+t,y,z)f(y)g(z)\,dydz\\
\end{align*}
The term $A$ is easy to handle with the mean value theorem; we have
$$
|A(x)|\lesssim |t|\|\nabla b\|_\infty \I_\al(|f|,|g|)(x).
$$
Consequently, we obtain
\begin{equation}\label{esti-A}
\|A\|_{L^r}\lesssim |t|\|\nabla b\|_{L^\infty}\|f\|_{L^p}\|g\|_{L^q}.
\end{equation}
{We now consider the terms $B$, $C$ and $D$.}\footnote{We actually obtain estimates for these terms that slightly improve on the corresponding ones for $\al=0$ in \cite{BT}.}
We start with $B$.
\begin{align*}
|B(x)|&\leq\iint_{|x-y|+|x-z|>\delta}(b(x+t)-b(y))(K_\al(x+t,y,z)-K_\al(x,y,z))f(y)g(z)\,dydz\\
&\leq 2\|b\|_\infty\iint_{|x-y|+|x-z|>\delta}|K_\al(x+t,y,z)-K_\al(x,y,z)|\, |f(y)|\,|g(z)|\,dydz\\
&\lesssim |t|\|b\|_\infty\iint_{|x-y|+|x-z|>\delta}\frac{|f(y)||g(z)|}{(|x-y|+|x-z|)^{2n-\al+1}}\,dydz\\
&\lesssim |t|\|b\|_\infty\iint_{\max(|x-y|,|x-z|)>\frac{\delta}{2}}\frac{|f(y)||g(z)|}{\max(|x-y|,|x-z|)^{2n-\al+1}}\,dydz\\
&=|t|\|b\|_\infty\sum^\infty_{k=0}\iint_{2^{k-1}\delta<\max(|x-y|,|x-z|)\leq2^k{\delta}}\frac{|f(y)||g(z)|}{\max(|x-y|,|x-z|)^{2n-\al+1}}\,dydz\\
&\leq|t|\|b\|_\infty\sum^\infty_{k=0}\frac{1}{(2^k\delta)^{2n-\al+1}}\iint_{\max(|x-y|,|x-z|)\leq2^k{\delta}}{|f(y)||g(z)|}\,dydz.
\end{align*}
Note now that
$$\{(y,z)\in \R^{2n}:\max(|x-y|,|x-z|)\leq2^k{\delta}\} \, { \subset} \, B_{2^{k+1}\delta}(x)\times B_{2^{k+1}\delta}(x),$$
where $B_r(x)$ denotes the ball of radius $r$ centered at $x$. Therefore, we can further estimate
\begin{align*}
|B(x)|&\lesssim \frac{|t|\|b\|_\infty}{\delta}\sum^\infty_{k=0}  \frac{ |B_{2^k\delta}(x)|^{\frac{\al}{n}}} {2^{k}} \frac{1}{|B_{2^k\delta}(x)|}\int_{B_{2^k\delta}(x)}  |f(y)|\,dy \frac{1}{|B_{2^k\delta}(x)|}\int_{B_{2^k\delta}(x)}  | g(z)|\,dz\\
&\lesssim |t|\|b\|_\infty \frac{1}{\delta}\M_\al(f,g)(x)\Big(\sum^\infty_{k=0}2^{-k}\Big)=\frac{2|t|\|b\|_\infty}{\delta}\M_\al(f,g)(x),
\end{align*}
where
$$\M_\al(f,g)(x)=\sup_{Q\ni x} |Q|^{\al/n}\Big(\dashint_Q|f(y)|\,dy\Big)\Big(\dashint_Q|g(z)|\,dz\Big).$$
Since the operator $\M_\al(f,g)$ is pointwise smaller than $I_\al(|f|,|g|)$, we get $\M_\al:L^p\times L^q\ra L^r$.  In turn, this yields
\begin{equation}
\label{esti-B}
\|B\|_{L^r}\lesssim \frac{|t|\|b\|_\infty}{\delta}\|f\|_{L^p}\|g\|_{L^q}.
\end{equation}

Let us now estimate the $C$ term.
\begin{align*}
|C(x)|&\leq\iint_{|x-y|+|x-z|\leq\delta}|b(y)-b(x)|\,|K_\al(x,y,z)|\,|f(y)|\,|g(z)|\,dydz\\
&\lesssim \|\nabla b\|_{\infty}\iint_{|x-y|+|x-z|\leq\delta}\frac{|x-y|}{(|x-y|+|x-z|)^{2n-\al}}\,|f(y)||g(z)|\,dydz\\
&\leq \|\nabla b\|_{\infty}\iint_{|x-y|+|x-z|\leq\delta}\frac{|f(y)||g(z)|}{(|x-y|+|x-z|)^{2n-\al-1}}\,\,dydz\\
&\lesssim \|\nabla b\|_{\infty}\sum_{k=0}^\infty \iint_{2^{-k-1}\delta<\max(|x-y|,|x-z|)\leq2^{-k}\delta}\frac{|f(y)||g(z)|}{\max(|x-y|,|x-z|)^{2n-\al-1}}\,\,dydz\\
&\lesssim \|\nabla b\|_{\infty}\sum_{k=0}^\infty \frac{2^{-k}\delta}{(2^{-k}\delta)^{2n-\al}}\iint_{\max(|x-y|,|x-z|)\leq2^{-k}\delta}{|f(y)||g(z)|}\,\,dydz\\
&\lesssim \delta\|\nabla b\|_{\infty}\M_\al(f,g)(x).\\
\end{align*}
From here, we get
\begin{equation}
\label{esti-C}
\|C\|_{L^r}\lesssim \delta\|\nabla b\|_\infty\|f\|_{L^p}\|g\|_{L^q}.
\end{equation}
For the last term $D$ we have an identical estimate to the $C$ term, except that $x$ is now replaced by $x+t$. We have
\begin{align*}
|D(x)|&\leq\iint_{|x-y|+|x-z|\leq\delta}|b(x+t)-b(y)|\,|K_\al(x+t,y,z)f(y)g(z)|\,dydz\\
&\lesssim\|\nabla b\|_{\infty} \iint_{|x+t-y|+|x+t-z|\leq \delta+2|t|}\frac{|x+t-y|  |f(y)|\,|g(z)|}{(|x+t-y|+|x+t-z|)^{2n-\al}}\,dydz\\
&\leq\|\nabla b\|_{\infty} \iint_{|x+t-y|+|x+t-z|\leq \delta+2|t|}\frac{|f(y)|\,|g(z)|}{(|x+t-y|+|x+t-z|)^{2n-\al}}\,dydz\\
&\lesssim (\delta+|t|)\|\nabla b\|_\infty \M_\al(f,g)(x+t).
\end{align*}
Thus, as above, we get
\begin{equation}
\label{esti-D}
\|D\|_{L^r}\lesssim (\delta+|t|)\|\nabla b\|_\infty\|f\|_{L^p}\|g\|_{L^q}.
\end{equation}
Let $1>\epsilon>0$ be given. 
For each $0< |t| < \epsilon^2$ we now select $\delta= |t|/\epsilon$. Estimates ~\eqref{esti-A}, ~\eqref{esti-B}, ~\eqref{esti-C} and \eqref{esti-D} then prove
$$\|[T_\al, b]_1(f, g)(\cdot+t)-[T_\al, b]_1 (f, g)(\cdot)\|_{L^r}\lesssim \epsilon\|f\|_{L^p}\|g\|_{L^q},$$
that is, condition (c) in Theorem B holds.
\end{proof}

\noindent {\bf Remark.} {\it Iterated commutators} can be considered as well. For example, one can look at operators of the form
$$
[T, b_1, b_2](f,g)=[[T, b_1]_1, b_2]_2(f,g)=[T, b_1]_1(f,b_2g)-b_2[T, b_1]_1(f,g).
$$
For bilinear \cz\, operators, the boundedness of such operators was studied in \cite{Tan08}, see also the work by P\'erez, Pradolini, Torres and Trujillo-Gonz\'alez \cite{PPTT}, while for
bilinear fractional integrals they were addressed in  \cite{LW}.  As pointed out in \cite{BT},
the compactness of the iterated commutators is actually easier to prove. The interested reader may adapt the arguments in \cite{BT} to our current situation $\{T_\al\}$.

\section{Separate compactness for commutators of the class $\{BI_\al\}$}
We will now examine a more singular family of bilinear fractional integral operators,
$$BI_\al(f,g)(x)=\int_{\R^n}\frac{f(x-y)g(x+y)}{|y|^{n-\al}}\,dy.$$
These operators were first introduced by Grafakos in \cite{G}, and later studied by Grafakos and Kalton \cite{GK} and Kenig and Stein \cite{KS}. We can view them as fractional versions of the bilinear Hilbert transform
$$BHT(f,g)(x)=p.v.\int_{\R}\frac{f(x-y)g(x+y)}{y}\,dy.$$
For $i=1, 2$ and $b\in BMO$, we define the commutators $[BI_\al, b]_i$ similarly to those of the operators $T_\al$. First, we prove that the commutators $[BI_\al, b]_i$, $i=1, 2$, are bounded. Our proof makes use of what we call the ``Cauchy integral trick''.

\begin{theorem} \label{thm:bdd} Let $0<\al<n$, $1<p, q, r<\infty$, $\frac1p+\frac1q<1$, $\frac1r=\frac{1}{p}+\frac{1}{q}-\frac{\al}{n}$, and $b\in BMO$. Then, for $i=1, 2$, we have
$$\|[BI_\al, b]_i (f, g)\|_{L^r}\leq C\|b\|_{BMO}\|f\|_{L^{p}}\|g\|_{L^{q}}.$$
\end{theorem}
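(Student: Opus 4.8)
The plan is to prove the estimate via the \emph{Cauchy integral trick}, reducing it to a \emph{weighted} bound for $BI_\al$ itself. I treat $[BI_\al,b]_1$; the case $i=2$ is identical, conjugating in the second slot. For $f,g$ bounded with compact support and $z\in\C$, consider the analytic family
\[
G_z(f,g)(x)=e^{-zb(x)}\,BI_\al\big(e^{zb}f,\,g\big)(x)=\int_{\rn}\frac{e^{z(b(x-y)-b(x))}\,f(x-y)\,g(x+y)}{|y|^{n-\al}}\,dy .
\]
Differentiating under the integral sign gives $\frac{d}{dz}G_z(f,g)\big|_{z=0}=[BI_\al,b]_1(f,g)$, so Cauchy's formula for the first derivative yields, for every $\epsilon>0$,
\[
[BI_\al,b]_1(f,g)=\frac{1}{2\pi i}\oint_{|z|=\epsilon}\frac{G_z(f,g)}{z^2}\,dz .
\]
By Minkowski's integral inequality, $\|[BI_\al,b]_1(f,g)\|_{L^r}\lesssim \epsilon^{-1}\sup_{|z|=\epsilon}\|G_z(f,g)\|_{L^r}$, so everything comes down to a bound on $\|G_z(f,g)\|_{L^r}$ that is uniform on the circle $|z|=\epsilon$.

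Writing $\lambda=\mathrm{Re}\,z$ and using the positivity of the kernel $|y|^{\al-n}$, we have the pointwise domination
\[
|G_z(f,g)(x)|\le e^{-\lambda b(x)}\,BI_\al\big(e^{\lambda b}|f|,\,|g|\big)(x),
\]
so it is enough to prove, uniformly for $|\lambda|\le\epsilon$,
\[
\big\|e^{-\lambda b}\,BI_\al(e^{\lambda b}\phi,\psi)\big\|_{L^r}\lesssim \|\phi\|_{L^p}\,\|\psi\|_{L^q}.
\]
Substituting $\Phi=e^{\lambda b}\phi$ (so that $\|\phi\|_{L^p}=\|\Phi\|_{L^p(e^{-\lambda pb})}$) recasts this as the \emph{weighted} estimate
\[
\|BI_\al(\Phi,\psi)\|_{L^r(e^{-\lambda rb})}\lesssim \|\Phi\|_{L^p(e^{-\lambda pb})}\,\|\psi\|_{L^q},
\]
in which the three weights satisfy the compatibility $\big(e^{-\lambda rb}\big)^{1/r}=\big(e^{-\lambda pb}\big)^{1/p}\cdot 1$ required by the bilinear Muckenhoupt condition.

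This weighted inequality is valid whenever the pair of weights $\big(e^{-\lambda pb},\,1\big)$ belongs to the bilinear Muckenhoupt class $A_{\vec P}$ adapted to $BI_\al$ (with $\vec P=(p,q)$ and fractional scaling $1/r=1/p+1/q-\al/n$), with constant controlled by the $A_{\vec P}$ characteristic. The crucial point is that for $b\in BMO$ this pair lies in $A_{\vec P}$ with characteristic bounded \emph{independently of $\lambda$}, provided $|\lambda|\,\|b\|_{BMO}$ is small: this follows from the John--Nirenberg inequality, which gives $\dashint_Q e^{\pm\lambda p(b-b_Q)}\lesssim 1$ as soon as $|\lambda|\,\|b\|_{BMO}\le c_{n,p}$. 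Choosing $\epsilon=c/\|b\|_{BMO}$ therefore makes $\sup_{|z|=\epsilon}\|G_z(f,g)\|_{L^r}\lesssim \|f\|_{L^p}\|g\|_{L^q}$ with an absolute constant, and the factor $\epsilon^{-1}=c^{-1}\|b\|_{BMO}$ from the contour estimate in the first paragraph yields $\|[BI_\al,b]_1(f,g)\|_{L^r}\lesssim \|b\|_{BMO}\|f\|_{L^p}\|g\|_{L^q}$. A routine density argument removes the a priori restriction on $f$ and $g$.

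The main obstacle is the weighted estimate for $BI_\al$ used above. In contrast to the operators $T_\al$ of Section~2, the kernel of $BI_\al$ concentrates on the ``antidiagonal'' $\{(x-y,x+y)\}$ and $BI_\al(|f|,|g|)$ is \emph{not} pointwise dominated by the bilinear Riesz potential $\I_\al$, so its weighted theory does not follow from the estimates for $\I_\al$ quoted after Theorem~C. I expect the heart of the argument to be establishing the $A_{\vec P}$-weighted boundedness of $BI_\al$ directly---exploiting the positivity of the kernel through a Sawyer-type testing or dyadic discretization argument---and then verifying, via John--Nirenberg, that the exponential symbols $e^{\pm\lambda b}$ sit in the relevant class with $\lambda$-independent constant. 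Once these two facts are secured, the Cauchy integral trick assembles the proof essentially mechanically, and the same scheme applied in the second variable handles $[BI_\al,b]_2$.
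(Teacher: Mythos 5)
Your proposal is essentially the paper's own proof: the same Cauchy integral trick (conjugation $e^{\pm zb}$, Cauchy's formula on $|z|=\epsilon$ with $\epsilon\sim\|b\|_{BMO}^{-1}$, Minkowski's inequality) reducing the commutator bound to a weighted estimate for $BI_\al$ with weights $(e^{\lambda b},1)$, verified uniformly via John--Nirenberg. The one ingredient you flag as the ``main obstacle'' --- the weighted boundedness of $BI_\al$, which as you correctly note cannot be obtained by pointwise domination by $\I_\al$ --- is not proved in the paper either, but imported from Bernicot--Maldonado--Moen--Naibo \cite{BMMN} and Moen \cite{M} in the product form $BI_\al:L^p(w_1^p)\times L^q(w_2^q)\to L^r(w_1^rw_2^r)$ for $w_1,w_2\in A_{s,r}$ with $\frac1s=\frac1p+\frac1q$ (a condition on each weight separately, rather than a genuinely multilinear $A_{\vec P}$ class), which the pair $(e^{\lambda b},1)$ satisfies exactly as you argue; so your proof is complete once that known result is cited rather than re-derived.
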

\begin{proof} We will work with the commutator in the first variable; the proof for the second variable is identical. We define $s>1$ by $\frac1s=\frac1p+\frac1q$. As observed by Bernicot, Maldonado, Moen and Naibo \cite{BMMN}, see also \cite{M}, if $1<s<r$ satisfy $\frac1s-\frac1r=\frac{\al}{n}$, then $BI_\al$ is bounded on appropriate product weighted Lebesgue spaces; we have
\begin{equation}\label{weightedbounds}BI_{\al}:L^{p}(w_1^{p})\times L^{q}(w_2^{q})\ra L^r(w_1^{r}w_2^{r})\end{equation}
where $w_1,w_2\in A_{s, r}$, that is, for $i=1, 2$,
$$\sup_Q\left(\frac{1}{|Q|}\int_Q w_i^r\,dx\right)\left(\frac{1}{|Q|}\int_Q w_i^{-s'}\,dx\right)^{\frac{r}{s'}}<\infty.$$
Without loss of generality, we may assume $f, g\in C_c^{\infty}(\R^n)$ and $b$ is real valued.  For $z\in \C$, consider the holomorphic function (in $z$)
$$T_{z} (f, g; \al)=e^{zb}BI_\al(e^{-zb}f, g),$$
and notice that by the Cauchy integral formula, for $\ep>0$,
$$
[BI_\al, b]_1(f, g)=-\frac{d}{dz}T_{z} (f, g; \al)\Big|_{z=0}=-\frac{1}{2\pi i}\int_{|z|=\ep}\frac{T_{z} (f, g; \al)}{z^2}\,dz.
$$
Since $r>1$, we can use Minkowski's integral inequality to obtain
$$\|[BI_\al, b]_1 (f, g)\|_{L^r}\leq \frac{1}{2\pi \ep^2}\int_{|z|=\ep}\|T_{z} (f, g; \al)\|_{L^r}\,|dz|$$
and
$$\|T_{z} (f, g; \al)\|_{L^r}^r=\int_{\R^n}\left( |BI_\al (e^{-zb}f, g)| e^{(\text{Re}\, z) b}\right)^r\,dx.$$
For $\ep>0$, $\ep\lesssim \|b\|_{BMO}^{-1}$, and $|t|\leq \ep$, by John-Nirenberg's inequality, we have $e^{tb}\in A_{s, r}$.  Therefore, by \eqref{weightedbounds} with $w_1=e^{b}$ and $w_2=1$, we have
\begin{align*}
 \|T_{z} (f, g; \al)\|_{L^r}&=\left(\int_{\R^n}\left( |BI_\al(e^{-zb}f, g)|e^{(\text{Re}\, z) b}\right)^r\,dx\right)^{1/r}\\
 &\leq C\left(\int_{\R^n}(|e^{-zb}f|e^{(\text{Re}\, z) b})^{p}\,dx\right)^{1/p}\left(\int_{\R^n}|g|^{q}\,dx\right)^{1/q}\\
 &=C\|f\|_{L^{p}}\|g\|_{L^{q}}.
 \end{align*}
The desired result follows from here.
\end{proof}

\begin{theorem}\label{main2} Let $0< \al<n$, $1<p, q, r<\infty$, $\frac{1}{p}+\frac{1}{q}<1$, $\frac1r=\frac{1}{p}+\frac{1}{q}-\frac{\al}{n}$, and $b\in CMO$. Then, $[ BI_\al, b]_1, [BI_\al,b]_2: L^p\times L^q\ra L^r$ are separately compact.
\end{theorem}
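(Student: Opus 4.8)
The plan is to apply the Fréchet–Kolmogorov–Riesz criterion (Theorem B) to the two \emph{linear} operators obtained by freezing one input, exactly as in the proof of Theorem~\ref{main1}, while exploiting two features special to $BI_\al$. First, $BI_\al$ is symmetric: the substitution $y\mapsto -y$ gives $BI_\al(f,g)=BI_\al(g,f)$, whence $[BI_\al,b]_1(f,g)=[BI_\al,b]_2(g,f)$. It therefore suffices to show that $[BI_\al,b]_1$ is compact in each variable separately, the corresponding statements for $[BI_\al,b]_2$ following by interchanging $f$ and $g$. Second, as in Theorem~\ref{main1}, the estimate of Theorem~\ref{thm:bdd}, the fact that $CMO$ is the $BMO$-closure of $C_c^\infty$, and the operator-norm stability of compact operators from \cite{BT} allow me, in either variable, to reduce to the case where $b$ and the \emph{frozen} input both lie in $C_c^\infty$. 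The object under study is then a bounded linear map $L^p\to L^r$ (resp. $L^q\to L^r$) to which Theorem B applies, and its condition (a) is immediate from Theorem~\ref{thm:bdd}.

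For condition (b) I would separate the two variables. After the change of variables $u=x-y$ one has $[BI_\al,b]_1(f,g)(x)=\int (b(u)-b(x))f(u)g(2x-u)\,|x-u|^{\al-n}\,du$. A support analysis shows that when the \emph{first} input $f$ is varied (and $g,b\in C_c^\infty$ are frozen), the output is supported in a fixed ball, so (b) is automatic; this is the ``easy'' variable. When instead the \emph{second} input $g\in B_{1,q}$ is varied (and $f,b\in C_c^\infty$ are frozen), the output need not have compact support, and here I would use the pointwise bound $|[BI_\al,b]_1(f,g)(x)|\lesssim |x|^{\al-n}\|g\|_{L^q}$ for $|x|$ large, obtained by noting that $b(x)=0$ there and that the integrand is supported in $u\in\text{supp}\,f\cap\text{supp}\,b$. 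Since $\frac1p+\frac1q<1$ forces $1/r<(n-\al)/n$, i.e. $(n-\al)r>n$, the function $|x|^{\al-n}$ is $r$-integrable at infinity and (b) follows uniformly in $g$. This is exactly where the standing hypothesis $\frac1p+\frac1q<1$ enters.

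The heart of the matter is condition (c), and the key device is a translation of the integration variable that moves the shift off the rough input and onto the smooth factors. For the hard variable, freezing $f,b\in C_c^\infty$ and realigning the argument of $g$ in $[BI_\al,b]_1(f,g)(x+t)$ gives
\[
[BI_\al,b]_1(f,g)(x+t)-[BI_\al,b]_1(f,g)(x)=\int \big(G_t(x,y)-G(x,y)\big)g(x+y)\,dy,
\]
where $G(x,y)=(b(x-y)-b(x))f(x-y)\,|y|^{\al-n}$ and $G_t(x,y)=(b(x+2t-y)-b(x+t))f(x+2t-y)\,|y-t|^{\al-n}$, so that $g$ is left \emph{undifferenced}. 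Since $b,f\in C_c^\infty$, the numerator of $G$ is Lipschitz and vanishes on the diagonal $y=0$, yielding $|G|\lesssim|y|^{\al+1-n}$ and $|\nabla_{x,y}G|\lesssim|y|^{\al-n}$ on the region where it is nonzero (which is compactly supported in $x-y$). I would then split the $y$-integral at $|y|=\delta$. On $|y|\le\delta$ the two terms are bounded separately by a truncated Riesz potential of order $\al+1$, which a dyadic argument controls by $\delta^{\al+1-\beta}M_\beta g$ for the linear fractional maximal function $M_\beta$ and a suitable $\beta\in[\max(0,\al-n/p),\min(\al+1,n/q))$; the nonemptiness of this interval is precisely the hypothesis $\al/n<\frac1p+\frac1q$, and combining the bounded $x$-support with $M_\beta\colon L^q\to L^{r_\beta}$, $r_\beta\ge r$, gives a contribution $\lesssim\delta^{\al+1-\beta}\|g\|_{L^q}$. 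On $|y|>\delta$ (with $|t|<\delta/2$) the mean value theorem and the gradient bound give $|G_t-G|\lesssim|t|\,|y|^{\al-n}$; splitting into $|x|$ bounded and $|x|$ large and using, respectively, a fractional-maximal estimate and the decay established in (b) produces a contribution $\lesssim|t|\,\|g\|_{L^q}$. Choosing $\delta$ as a function of $t$ so that $\delta\to0$ and $|t|<\delta/2$, both contributions tend to $0$, which is (c). The easy variable is treated identically, realigning the argument of $f$ instead and using the smoothness of the now-frozen $g$; there compact support makes the large-$x$ analysis unnecessary.

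The obstacle I anticipate is exactly condition (c) in the hard variable. The operator is genuinely non-compactly-supported there, the kernel is only a ``half'' kernel concentrated on $y+z=2x$, and the merely-$L^q$ input $g$ cannot be differenced. The realigning substitution resolves this, but it forces one to carry the translation simultaneously in the factor $|y|^{\al-n}$ and in the smooth functions $b,f$, and to combine a near-diagonal truncated-potential estimate with the global decay from (b). Keeping the intermediate exponent $\beta$ compatible with both the boundedness $M_\beta\colon L^q\to L^{r_\beta}$ and the embedding $r_\beta\ge r$ is the single point at which the interplay of all the exponent hypotheses must be verified with care.
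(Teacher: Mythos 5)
Your proposal is correct, and while it shares the paper's overall skeleton (reduce $b$ to $C_c^\infty$, verify (a)--(c) of Theorem B for the frozen linear operators, split at a scale $\delta$ and choose $\delta=\delta(t)$), your handling of the decisive continuity condition (c) is genuinely different. The paper keeps the frozen input rough --- $g\in L^q$ fixed, absorbed into the kernel $K_g(x,y)=g(2x-y)|x-y|^{\al-n}$ --- and runs the $A,B,C,D$ decomposition; the kernel difference then produces the terms $\tau_{2t}g-g$ (resp.\ $\tau_{2t}f-f$ in the second variable), controlled by continuity of translation in $L^q$, the one step whose smallness depends on the frozen function. You instead make an additional density reduction, approximating the frozen input by $C_c^\infty$ functions and using operator-norm limits of compact linear operators, and then apply the realigning substitution $y\mapsto y+t$ so that the \emph{varying} input is never differenced: all increments land on the smooth, compactly supported $b$ and frozen factor and on the kernel $|y|^{\al-n}$, where the mean value theorem applies; the vanishing of the numerator at the singularity gives your bound $|G|\lesssim |y|^{\al+1-n}$, and your exponent bookkeeping checks out ($[\max(0,\al-n/p),\min(\al+1,n/q))$ is nonempty exactly when $\al/n<1/p+1/q$, and $\beta\geq \al-n/p$ gives $r_\beta\geq r$, so H\"older on the bounded $x$-support works). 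Your symmetry observation $[BI_\al,b]_1(f,g)=[BI_\al,b]_2(g,f)$ is a nice economy the paper does not exploit. As to what each route buys: the paper works directly with an arbitrary frozen $L^q$ input, uses only the bilinear bounds for $BI_\al$ and $BM_\al$ already in hand, and isolates in the single terms $F$ and $H$ exactly where joint compactness fails (motivating its subsequent question); your route is more modular and never differences a merely-$L^q$ function, at the cost of the extra approximation in the frozen slot and the auxiliary linear maximal operator $M_\beta$ --- and since that approximation is likewise non-uniform over a ball of frozen inputs, it yields, just as in the paper, separate but not joint compactness. It is worth noting that the paper's own second-variable argument already contains the germ of your realignment (the preliminary step handling $b(2x+2t-y)-b(2x-y)$ together with the kernel $K_f$), and its estimate $|H(x)|\lesssim \|b\|_\infty\bigl(\tfrac{|t|}{\delta}BM_\al(\tau_{2t}f,g)(x)+\tfrac{1}{\delta}BM_\al(\tau_{2t}f-f,g)(x)\bigr)$ is precisely the term that your smoothing of the frozen $f$ converts into an $O(|t|)$ gain via the mean value theorem.
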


\begin{proof} We will work again with the commutator in the first variable.  By a change of variables, this commutator can be rewritten as
$$[BI_\al, b]_1(f,g)(x)=\int_{\R^n}\frac{b(y)-b(x)}{|x-y|^{n-\al}}f(y)g(2x-y)\,dy.$$
We may assume that $b\in C_c^\infty(\R^n)$ and aim to prove that the conditions (a), (b) and (c) of Theorem~B hold for the family of functions $[BI_\al, b]_1(f,g)$, where $g\in L^q$ is fixed and $f\in B_{1, p}$.

By Theorem~\ref{thm:bdd}, we already know that condition (a) is satisfied. Thus, we concentrate on proving (b) and (c).

The estimates that yield (b) are reminiscent of the ones used in the proof of Theorem~\ref{main1}. Assume $R>1$ is large enough so that $|x|\geq R$ implies $x\notin$ supp $b$.

Then
\begin{align*}|[BI_\al, b]_1(f,g)(x)|&\leq \|b\|_{\infty} \int_{\text{supp}\, b}{|x-y|^{\al-n}}|f(y)g(2x-y)|\,dy\\
&\lesssim \|b\|_{\infty} |x|^{\al-n}\int_{\text{supp}\, b}|f(y)g(2x-y)|\,dy\\
&\leq \|b\|_{\infty} |x|^{\al-n}\left(\int_{\text{supp} \, b} |f(y)|^{q'}\,dy\right)^{1/q'}\|g\|_{L^{q}}\\
\end{align*}
Let us write $\frac1s=\frac1p+\frac1q<1= \frac{1}{q}+\frac{1}{q'}$; so $q'<p$. As such, we can further estimate
$$\left(\int_{\text{supp} \ b} |f(y)|^{q'}\,dy\right)^{1/q'}\leq |\text{supp} \ b|^{\frac{1}{q'}-\frac{1}{p}} \|f\|_{L^{p}}= |\text{supp} \ b|^{\frac{1}{s'}} \|f\|_{L^{p}}.$$
Now, we raise to the power $r$ and integrate with respect to $x$ over the set $|x|>R$.  Notice that, since $s>1$, we have
$\frac1r=\frac1s-\frac{\al}{n}<\frac{n-\al}{n} \Leftrightarrow r(n-\al)>n.$ This allows us, for a given $\ep>0$, to control
$$\int_{|x|>R}|[BI_\al, b]_1 (f,g)(x)|^r\,dx<\ep^r$$
by taking $R=R(\ep)>0$ sufficiently large; which shows that, indeed, (b) is satisfied.

We are left to show the continuity condition (c), that is,
$$\lim_{t\ra 0}\|[BI_\al, b]_1(f,g)(\cdot+t)-[BI_\al, b]_1(f,g)\|_{L^r}=0,$$
uniformly for $\|f\|_{L^{p}}\leq 1$ and $g\in L^q$ fixed.  First, we lump our fixed function $g$ into a general kernel
$$[BI_\al, b]_1(f,g)(x)=\int_{\R^n}\frac{b(y)-b(x)}{|x-y|^{n-\al}}f(y)g(2x-y)\,dy$$$$=\int_{\R^n}(b(y)-b(x))K_g(x,y)f(y)\,dy$$
where
$$K_g(x,y)=\frac{g(2x-y)}{|x-y|^{n-\al}}.$$
Second, we split the commutator $[BI_\al, b]_1$ by following the decomposition used for $[T_\al, b]_1$. Namely, we write $$[BI_\al, b]_1 (f, g)(x+t)-[BI_\al,b]_1(f,g)(x)=A(x)+B(x)+C(x)+D(x),$$ where
\begin{align*}A(x)&=\int_{|x-y|>\delta}(b(x+t)-b(x))K_g(x,y)f(y)\,dy,\\
B(x)&=\int_{|x-y|>\delta}(b(x+t)-b(y))(K_g(x+t,y)-K_g(x,y))f(y)\,dy,\\
C(x)&=\int_{|x-y|\leq\delta}(b(y)-b(x))K_g(x,y)f(y)\,dy,\\
D(x)&=\int_{|x-y|\leq\delta}(b(x+t)-b(y))K_g(x+t,y)f(y)\,dy.\\
\end{align*}
We will now estimate each term in this decomposition. For $A$, the estimate is immediate. We clearly have $|A(x)|\leq |t|\|\nabla b\|_\infty BI_\al(|f|, |g|)(x).$ Since $BI_\al$ is $L^p\times L^q\ra L^r$ bounded, we get $\|A\|_{L^r}\lesssim |t|\|f\|_{L^{p}}\|g\|_{L^{q}}.$

The estimate for the $B$ term is the most delicate. To facilitate the ease of reading, we postpone it until the end of the proof.

We estimate $C$ as follows:
\begin{align*}
|C(x)|&\leq\int_{|x-y|\leq\delta}|b(y)-b(x)|\frac{|g(2x-y)|}{|x-y|^{n-\al}}|f(y)|\,dy\\
&\leq \|\nabla b\|_{\infty}\int_{|x-y|\leq\delta}|x-y|\frac{|g(2x-y)|}{|x-y|^{n-\al}}|f(y)|\,dy\\
&\leq \delta\|\nabla b\|_{\infty} BM_\al(f, g)(x),
\end{align*}
where $BM_\al$ is the associated bilinear fractional maximal operator,
$$BM_\al(f,g)(x)=\sup_{r>0}\frac{1}{r^{n-\al}}\int_{|y|<r}|f(x-y)g(x+y)|\,dy.$$

The estimate for $D(x)$ is similar; we now have
$$|D(x)|\leq (\delta+|t|)\|\nabla b\|_\infty BM_\al(f, g)(x+t).$$
Again, since $BM_\al(f,g)\lesssim BI_\al(|f|,|g|)$, we have $BM_\al:L^p\times L^q\ra L^r$. Thus, similarly to $A$, we get $\|C\|_{L^r}\lesssim \delta\|f\|_{L^{p}}\|g\|_{L^{q}}$ and $\|D\|_{L^r}\lesssim (\delta+|t|)\|f\|_{L^{p}}\|g\|_{L^{q}}.$

Finally, we turn our attention to $B$.
\begin{align*}
|B(x)|&\leq\int_{|x-y|>\delta}|b(x+t)-b(y)|\left|\frac{g(2x+2t-y)}{|x+t-y|^{n-\al}}-\frac{g(2x-y)}{|x-y|^{n-\al}}\right||f(y)|\,dy\\
&\leq 2\| b\|_{\infty}\int_{|x-y|>\delta}\left|\frac{g(2x+2t-y)}{|x+t-y|^{n-\al}}-\frac{g(2x-y)}{|x-y|^{n-\al}}\right||f(y)|\,dy\\
&\lesssim \int_{|x-y|>\delta}\left|\frac{1}{|x+t-y|^{n-\al}}-\frac{1}{|x-y|^{n-\al}}\right||g(2x+2t-y)f(y)|\,dy\\
&\qquad +\int_{|x-y|>\delta}\frac{|g(2x+2t-y)-g(2x-y)||f(y)|}{|x-y|^{n-\al}}\,dy\\
&=E(x)+F(x)
\end{align*}
To estimate $E$, we note that
$$\left|\frac{1}{|x+t-y|^{n-\al}}-\frac{1}{|x-y|^{n-\al}}\right|\lesssim \frac{|t|}{|x-y|^{n-\al+1}},$$
which implies
\begin{align*}
E(x)&\lesssim |t|\int_{|x-y|>\delta}\frac{|g(2x+2t-y)f(y)|}{|x-y|^{n-\al+1}}\,dy\\
& \lesssim \frac{|t|}{\delta} BM_\al(f, \tau_{2t}g)(x).
\end{align*}
Here, $\tau_a$ is the shift operator $\tau_ag(x)=g(x+a)$.  It follows from the boundedness of $BM_\al$ that
$$\|E\|_{L^r}\lesssim \frac{|t|}{\delta}\|f\|_{L^{p}}\|g\|_{L^{q}}.$$
For $F(x)$ we have
$$F(x)\lesssim BM_\al(f, \tau_{2t}g-g)(x),$$
so
$$\|F\|_{L^r}\lesssim \|f\|_{L^{p}} \|\tau_{2t}g-g\|_{L^{q}}.$$
Since $g\in L^{q}$, for a given $\ep>0$ we can find $\gamma=\gamma(\ep,g)>0$ such that $|t|<\gamma$ implies
$$\|\tau_{2t}g-g\|_{L^{q}}<\ep.$$
Finally, by choosing $|t|<\ep^2$ and $\delta =|t|/\ep$ we get that
$$\|[BI_\al, b]_1(f,g)(\cdot+t)-[BI_\al, b]_1(f,g)\|_{L^r}\lesssim \ep.$$
This shows that (c) holds, thus finishing our proof for the compactness in the first variable.

We now show that $[BI_\al,b]_1$ is compact in the second variable, that is, $[BI_\al,b]_1(f,\cdot): L^q\ra L^r$ is compact for a fixed $f\in L^p$.  Conditions (a) and (b) of Theorem B follow from similar calculations to those performed above.  Thus we will check condition (c) of Theorem B.   For $f\in L^p$ fixed and $g\in B_{1,q}$  we write
\begin{align*}
\lefteqn{[BI_\al,b]_1(f,g)(x+t)-[BI_\al,b]_1(f,g)(x)}\\
&=\int_{\R^n}(b(2x+2t-y)-b(x+t))\frac{f(2x+2t-y)g(y)}{|x+t-y|^{n-\al}}\,dy\\
&\qquad -\int_{\R^n}(b(2x-y)-b(x))\frac{f(2x-y)g(y)}{|x-y|^{n-\al}}\,dy\\
&=\int_{\R^n}(b(2x+2t-y)-b(x+t))K_f(x+t,y)g(y)\,dy\\
&\qquad -\int_{\R^n}(b(2x-y)-b(x))K_f(x,y)g(y)\,dy\end{align*}
where this time we combine $f$ with the kernel:
$$K_f(x,y)=\frac{f(2x-y)}{|x-y|^{n-\al}}.$$
Before proceeding further, we make one reduction.  Notice that
\begin{align}
[b,BI_\al]_1(f,g)(x+t)&=\int_{\R^n}(b(2x+2t-y)-b(x+t))K_f(x+t,y)g(y)\,dy\nonumber\\
\label{firstterm}&=\int_{\R^n}(b(2x+2t-y)-b(2x-y))K_f(x+t,y)g(y)\,dy\\
&\qquad +\int_{\R^n}b(2x-y)-b(x+t))K_f(x+t,y)g(y)\,dy\nonumber
\end{align}
The first term in the sum \eqref{firstterm} is bounded by
$$ 2\|\nabla b\|_\infty |t|BI_\al(|f|,|g|)(x+t)$$
and the $L^r$ norm of this quantity will go to zero uniformly for $g\in B_{1,q}$ as $t\ra 0$.   Thus it remains to estimate \begin{multline*}
\int_{\R^n}b(2x-y)-b(x+t))K_f(x+t,y)g(y)\,dy \\-\int_{\R^n}(b(2x-y)-b(x))K_f(x,y)g(y)\,dy
=G(x)+H(x)+I(x)+J(x)
\end{multline*}
where
\begin{align*}
G(x)&=\int_{|x-y|>\delta}(b(x)-b(x+t))K_f(x,y)g(y)\,dy,\\
H(x)&=\int_{|x-y|>\delta}(b(2x-y)-b(x+t))(K_f(x+t,y)-K_f(x,y))g(y)\,dy,\\
I(x)&=\int_{|x-y|\leq \delta}(b(x)-b(2x-y))K_f(x,y)g(y)\,dy,\\
J(x)&=\int_{|x-y|\leq \delta}(b(2x-y)-b(x+t))K_f(x+t,y)g(y)\,dy.
\end{align*}
The estimates for $G,H,I,$ and $J$ are handled similarly to the corresponding estimates for $A,B,C,$ and $D$ above, again, with $H$ being the most complicated.  For example the estimates for $G,I,$ and $J$ are as follows:
$$|G(x)|\leq |t|\|\nabla b\|_\infty BI_\al(f,g)(x),$$
$$|I(x)|\leq \delta\|\nabla b\|_\infty BM_\al(f,g)(x),$$
and
$$|J(x)|\leq (\delta+|t|)\|\nabla b\|_\infty BM_\al(f,g)(x+t).$$
Finally, for $H$ we have
$$
|H(x)|\lesssim \|b\|_\infty\Big( \frac{|t|}{\delta}BM_{\al}(\tau_{2t}f,g)(x)+\frac{1}{\delta}BM_{\al}(\tau_{2t}f-f,g)(x)\Big).
$$
These estimates show that $[BI_\al,b]_1(f,g)$ is compact in the second variable as well, thus showing that it is separately compact.
\end{proof}

A close inspection of the proof of Theorem~\ref{main2} shows that we barely miss proving joint compactness. Indeed, the only non-uniform estimate concerns the very last terms, which we denote by $F$  and $H$, where we use the fact that we can make the quantity
$\|\tau_{2t}g-g\|_{L^{q}}$ (or $\|\tau_{2t}f-f\|_{L^p}$) small by taking $t$ sufficiently small and, crucially, dependent on $g$ (or $f$).  Thus, our method of proof only yields separate compactness.  Compared to the nicely behaved operators $T_\al$, we have in effect a weaker smoothing property of the commutators of the more singular bilinear fractional integrals, $BI_\al$.

The remarks above motivate the following question.

\begin{question} For $b\in CMO$, are the commutators $[BI_\al, b]_i, i=1,2,$ jointly compact? \end{question}

The techniques used in this section can be applied to commutators of the $BHT$, if, {\it a priori}, we know that its commutators are bounded.  Specifically, if we assume that $[BHT,b]_1:L^p\times L^q\ra L^r$, then $[BHT,b]_1$ is separately compact for $b\in CMO$; a similar result holds for $[BHT,b]_2$.  This leads to the following natural question about the bilinear Hilbert transform.

\begin{question} For $b\in BMO$, are the commutators $[BHT,b]_i, i=1,2,$ bounded from $L^p\times L^q\rightarrow L^r$? \end{question}

\end{document}